\documentclass{article}

\usepackage{amsmath,amsthm,bm,colortbl,graphicx,mathrsfs,amssymb,afterpage,url,cite,verbatim,ulem, multirow, subfig, rotating, booktabs, caption}

\theoremstyle{plain}
\newtheorem{theorem}{Theorem}
\newtheorem*{theorem1'}{Theorem 1$'$}
\newtheorem*{theorem2'}{Theorem 2$'$}

\hyphenation{Brown-ian}

\def\P{{\rm P}} 
\def\E{{\rm E}} 
\def\eps{\varepsilon}
\setlength{\arraycolsep}{0.5mm} 
\allowdisplaybreaks

\makeatother
\title{The tilted flashing Brownian ratchet}
\author{S. N. Ethier\thanks{Department of Mathematics, University of Utah, 155 S. 1400 E., Salt Lake City, UT 84112, USA. e-mail: ethier@math.utah.edu.}\; and Jiyeon Lee\thanks{Department of Statistics, Yeungnam University, 280 Daehak-Ro, Gyeongsan, Gyeongbuk 38541, South Korea.  e-mail: leejy@yu.ac.kr.}}
\date{}

\begin{document}

\maketitle

\begin{abstract}
The flashing Brownian ratchet is a stochastic process that alternates between two regimes, a one-dimensional Brownian motion and a Brownian ratchet, the latter being a one-dimensional diffusion process that drifts towards a minimum of a periodic asymmetric sawtooth potential.  The result is directed motion.  In the presence of a static homogeneous force that acts in the direction opposite that of the directed motion, there is a reduction (or even a reversal) of the directed motion effect.  Such a process may be called a tilted flashing Brownian ratchet.  We show how one can study this process numerically, using a random walk approximation or, equivalently, using numerical solution of the Fokker--Planck equation.  Stochastic simulation is another viable method.
\medskip\par
\noindent\textit{Key words and phrases}: Brownian motion with drift, tilted Brownian ratchet, random walk, Parrondo's paradox, capital-dependent Parrondo games, stochastic simulation, Fokker--Planck equation.
\end{abstract}

\section{Introduction}\label{intro}

The flashing Brownian ratchet, introduced by Ajdari and Prost~\cite{AP92}, is a stochastic process that alternates between two regimes, a one-dimensional Brownian motion and a Brownian ratchet, the latter being a one-dimensional diffusion process that drifts towards a minimum of a periodic asymmetric sawtooth potential.  The result is directed motion, as shown in panels ($a$)--($c$) of Figure~\ref{HAT00fig} (from Harmer et al.~\cite{HAT00}).  This conceptual figure, specifically panels ($a$)--($c$), although largely accurate, can be improved, as demonstrated by Ethier and Lee~\cite{EL18} using a random walk approximation.

\begin{figure}[th]
\centering
\includegraphics[width=4.5in]{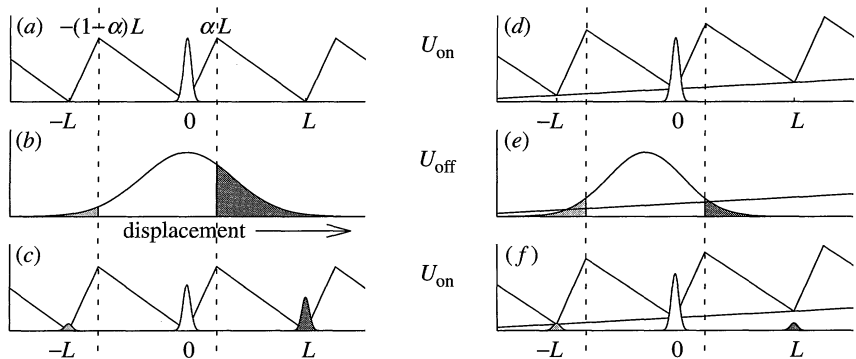}
\caption{This shows how the mechanism of the ratchet potential works. The diagrams on the left, ($a$)--($c$), show when there is no macroscopic gradient present and the net movement of particles is in the forward direction (defined by arrow). The diagrams on the right, ($d$)--($f$), have a slight gradient present; this causes the particles to drift backwards while $U_{\text{off}}$ is acting. Hence the net flow of particles in the forward direction is reduced. (Figure and caption reprinted from Harmer et al.~\cite{HAT00} with the permission of the Royal Society.  A nearly identical figure appeared in Harmer and Abbott~\cite{HA99}.)}
\label{HAT00fig}
\end{figure}

In the presence of a static homogeneous force that acts in the direction opposite that of the directed motion, there is a reduction (or even a reversal) of the directed motion effect, as illustrated in panels ($d$)--($f$) of Figure~\ref{HAT00fig}.  (For alternative figures, see Astumian~\cite[Fig.~2A]{A97} and Astumian and H\"anggi~\cite[Fig.~2]{AH02}.)  We refer to such a process as a \textit{tilted flashing Brownian ratchet}, and it is our aim here to study this process numerically, much as we did for the flashing Brownian ratchet~\cite{EL18}, using a random walk approximation.  We will see, in particular, that panels ($d$)--($f$) of the conceptual Figure~\ref{HAT00fig}, although again largely accurate, can be improved as well.

We formulate the model using the notation of Figure~\ref{HAT00fig}.  First, the asymmetric sawtooth potential $V$ can be defined by
\begin{equation}\label{sawtooth}
V(x):=\begin{cases} x/\alpha&\text{if $0\le x\le\alpha L$,}\\(L-x)/(1-\alpha)&\text{if $\alpha L\le x\le L$,}\end{cases}
\end{equation}
extended periodically, with period $L$, to all of ${\bf R}$.  Here $0<\alpha<1$ and $L>0$, and $\alpha\ne1/2$ by virtue of the asymmetry.  ($\alpha$ is a shape parameter, and $L$ is the period and the amplitude of the potential.) The \textit{tilted Brownian ratchet} is a one-dimensional diffusion process with diffusion coefficient 1 and drift coefficient $\mu$ of the form, for some $\gamma>0$ and $\kappa$ real,
\begin{equation}\label{mu(x)}
\mu(x):=-[\gamma V'(x)+\kappa]=\begin{cases}-\gamma/\alpha-\kappa&\text{if $0\le x<\alpha L$,}\\ \gamma/(1-\alpha)-\kappa&\text{if $\alpha L\le x< L$,}\end{cases}
\end{equation}
again extended periodically, with period $L$, to all of ${\bf R}$.  
Such a process $X_t$ is governed by the It\^o stochastic differential equation (SDE)
\begin{equation}\label{SDE1}
dX_t=-[\gamma V'(X_t)+\kappa]\,dt+dB_t,
\end{equation}
where $B_t$ is standard Brownian motion.  This diffusion process drifts to the left on $(nL,nL+\alpha L)$ and drifts to the right on $(nL-(1-\alpha)L,nL)$, for each $n\in{\bf Z}$, provided $-\gamma/\alpha<\kappa<\gamma/(1-\alpha)$.  In other words, it drifts towards a minimum of the sawtooth potential $\gamma V$ (or a local minimum of the tilted sawtooth potential $\gamma V(x)+\kappa x$).  $\gamma L$ is the amplitude of the potential and $\kappa$ is the slope of the tilt. 

While the inequalities $-\gamma/\alpha<\kappa<\gamma/(1-\alpha)$ are not needed to define the process, they are needed for the tilted ratchet to remain a ratchet.  Furthermore, we usually assume that the static homogeneous force acts in the direction opposite that of the directed motion, that is, $\big(\frac12-\alpha\big)\kappa>0$, but this is not essential either.  

It may be helpful to reconcile our model with those in the literature.  Our tilted Brownian ratchet is equivalent to what Reimann~\cite[Eq.~(2.34)]{R02} called a ``tilted Smoluchowski--Feynman ratchet,'' acknowledging two of the pioneers of the subject.  It is given by the Langevin equation
\begin{equation}\label{Langevin}
\eta\, \dot{x}(t)=-\beta V'(x(t))+F+\sqrt{2\eta k_\text{B}T}\,\xi(t),
\end{equation}
where $x(t)$ is the position of the particle at time $t$, $V$ is the periodic sawtooth potential~\eqref{sawtooth}, $\beta L$ is the amplitude of the potential ($\beta$ does not appear in \cite{R02} because its $V$ is our $\beta V$),  $F$ is an additional static homogeneous force, $\xi(t)$ is a Gaussian white noise with mean $\langle\xi(t)\rangle=0$ and covariance $\langle \xi(t)\xi(s)\rangle=\delta(t-s)$, $\eta$ is a friction coefficient, $k_\text{B}$ is Boltzmann's constant, and $T$ is temperature.  We interpret this equation mathematically as the SDE
$$
\eta\, dx(t)=[-\beta V'(x(t))+F]\,dt+\sqrt{2\eta k_\text{B}T}\,db(t),
$$
where $b(t)$ is standard Brownian motion, or
\begin{equation}\label{SDE-b}
dx(t)=\eta^{-1}[-\beta V'(x(t))+F]\,dt+\sigma\,db(t)
\end{equation}
with $\sigma=\sqrt{2\eta^{-1} k_\text{B}T}$.  With the time change $X_t:=x(\sigma^{-2}t)$ and standard Brownian motion $B_t:=\sigma\, b(\sigma^{-2}t)$, \eqref{SDE-b} becomes
$$
dX_t=\sigma^{-2}\eta^{-1}[-\beta V'(X_t)+F]\,dt+dB_t,
$$
which is \eqref{SDE1} with 
$$
\gamma=\sigma^{-2}\eta^{-1}\beta=\frac{\beta}{2k_\text{B}T}\quad\text{and}\quad \kappa=-\sigma^{-2}\eta^{-1}F=-\frac{F}{2k_\text{B}T}.
$$  
Other sources with equations similar to \eqref{Langevin} include Reimann and H\"anggi~\cite[Eq.~(11)]{RH02}, Parrondo and de Cisneros~\cite[Eq.~(22)]{PD02}, Dinis~\cite[Eq.~(1.88) with $\alpha(t)=1$]{D06}, and H\"anggi and Marchesoni~\cite[Eq.~(3)]{HM09}. 

Given time parameters $\tau_1,\tau_2>0$, the \textit{tilted flashing Brownian ratchet} is a time-inhomogeneous one-dimensional diffusion process that evolves as a Brownian motion with drift $-\kappa$ (i.e., $B_t-\kappa t$) on $[0,\tau_1]$ (potential ``off''), then as the tilted Brownian ratchet~\eqref{SDE1} on $[\tau_1,\tau_1+\tau_2]$ (potential ``on''), and so on, alternating between these two regimes.  Such a process $Y_t$ is governed by the SDE
\begin{equation}\label{SDE2}
dY_t=-[\gamma\zeta(t) V'(Y_t)+\kappa]\,dt+dB_t,
\end{equation}
where\footnote{The quantity $\text{mod}(t,\tau)$ is defined as the remainder (in $[0,\tau)$) when $t$ is divided by $\tau$.}
\begin{equation}\label{zeta(t)}
\zeta(t):=\begin{cases}0&\text{if mod$(t,\tau_1+\tau_2)<\tau_1$,}\\
                       1&\text{if mod$(t,\tau_1+\tau_2)\ge\tau_1$.}\end{cases}
\end{equation}

The flashing Brownian ratchet is the process that motivated Parrondo's paradox (Harmer and Abbott~\cite{HA99,HA02}), in which two fair games of chance, when alternated in some way, produce a winning (or losing) game. The tilted flashing Brownian ratchet, which is a flashing Brownian ratchet in the presence of a static homogeneous force, can be discretized to yield a stronger form of Parrondo's paradox, in which two losing games combine to win (or two winning games combine to lose).  In Section~\ref{Parrondo-games} we provide a general formulation of Parrondo's paradox motivated by the tilted flashing Brownian ratchet.  These capital-dependent Parrondo games are modified in Section~\ref{approx} so as to yield our random walk approximation, which we then suggest can be improved.  Other ways of numerically studying the tilted flashing Brownian ratchet include stochastic simulation using the Euler--Maruyama method (Section~\ref{EM}), and numerical solution of the Fokker--Planck equation using the finite-difference method (Section~\ref{FP}). We show in Sections~\ref{modeling1} and \ref{modeling2} that the conceptual Figure~\ref{HAT00fig} is not an entirely accurate representation of the behavior of the tilted flashing Brownian ratchet.  This conclusion can be deduced from the random walk approximation, from stochastic simulation, or from numerical solution of the Fokker--Planck equation.

\section{Parrondo games and tilted Brownian ratchets}
\label{Parrondo-games}

Parrondo's paradox is a discretization and reinterpretation of the flashing Brownian ratchet, as explained in \cite{EL18} and in various other references \cite{HAT00, HA99, HA02, AA02, HATP01, TAM03a, TAM03b, HKK02}. The role of displacement is played by profit in a game of chance.  The role of Brownian motion is played by game $A$, whose cumulative-profit process is a simple symmetric random walk on $\textbf{Z}$.  The role of the Brownian ratchet is played by game $B$, whose cumulative-profit process is a simple random walk on $\textbf{Z}$ with periodic state-dependent transition probabilities.  The role of the flashing Brownian ratchet is played by some combination of games $A$ and $B$, either a random mixture (i.e., $cA+(1-c)B$, where $0<c<1$) or a nonrandom periodic pattern (e.g., $AABB\,AABB\,\cdots$).  Finally, the role of tilt is played by a bias parameter in the transition probabilities of the random walks.

Let $0<\alpha<1$ and assume that $\alpha$ is rational, so that $\alpha=l/L$ for some relatively prime positive integers $1\le l<L$.  The cumulative-profit process of game $B$ is the simple random walk on $\textbf{Z}$ with periodic state-dependent transition probabilities of the form  
\begin{equation}\label{transitions2}
P(j,j+1):=\begin{cases}p_0&\text{if mod$(j,L)<l$,}\\ p_1&\text{if mod$(j,L)\ge l$,}\end{cases}
\end{equation}
and $P(j,j-1)=1-P(j,j+1)$, where $0<p_0<1/2<p_1<1$.  Because of the period-$L$ transition probabilities, the unique reversible invariant measure $\pi$ must have period $L$ (i.e., $\pi(j)=\pi(j+L)$) for the random walk to be recurrent.  We can confirm that the detailed balance conditions have a solution if and only if $(1-p_0)^l(1-p_1)^{L-l}=p_0^l p_1^{L-l}$.  Solving for $p_1$, we obtain
\begin{equation*}
\label{p1}
p_1=\frac{1}{1+[p_0/(1-p_0)]^{\alpha/(1-\alpha)}}.  
\end{equation*}
With the $\alpha/(1-\alpha)$th power in the denominator denoted by $\rho$, the requirements that $0<p_0<1/2<p_1<1$ become $0<\rho<1$, and
\begin{equation}\label{p0,p1-2}
p_0=\frac{\rho^{(1-\alpha)/\alpha}}{1+\rho^{(1-\alpha)/\alpha}}\quad\text{and}\quad p_1=\frac{1}{1+\rho}.
\end{equation}
As shown in \cite{EL18}, game $B$ is fair (asymptotically), whereas the random mixture $cA+(1-c)B$, where $0<c<1$, is winning if $\alpha<1/2$ and losing if $\alpha>1/2$.  Thus, two fair games combine to win if $\alpha<1/2$, which is the \textit{Parrondo effect}, and  combine to lose if $\alpha>1/2$, which is the \textit{anti-Parrondo effect}.  Both are examples of Parrondo's paradox.  Analysis of nonrandom periodic patterns is more complicated.

A slightly stronger form of the Parrondo effect occurs when two losing games combine to win.  That can be achieved by replacing the cumulative-profit process of game $A$ by the simple random walk on $\textbf{Z}$ with 
\begin{equation}\label{p}
P(j,j+1)=p:=\frac12-\varepsilon
\end{equation}
and $P(j,j-1)=1-P(j,j+1)$, where the bias parameter $\varepsilon>0$ is sufficiently small,
and replacing the cumulative-profit process of game $B$ by the simple random walk on $\textbf{Z}$ with periodic state-dependent transition probabilities of the form \eqref{transitions2} with $\alpha<1/2$ but with \eqref{p0,p1-2} replaced by
\begin{equation}\label{p0,p1}
p_0=\frac{\rho^{(1-\alpha)/\alpha}}{1+\rho^{(1-\alpha)/\alpha}}-\varepsilon\quad\text{and}\quad p_1=\frac{1}{1+\rho}-\varepsilon.
\end{equation}
Parrondo's original formulation of the games assumed $\alpha=1/3$, $L=3$, $\rho=1/3$, and $\varepsilon=1/200$.

A slightly stronger form of the anti-Parrondo effect occurs when two winning games combine to lose.  That can be achieved as in the preceding paragraph but with $\alpha>1/2$ and $\varepsilon<0$, where $|\varepsilon|$ is sufficiently small.

\section{Random walk approximation}\label{approx}

We have seen that a discretization of the tilted flashing Brownian ratchet yields capital-dependent Parrondo games with a bias parameter.  In this section we demonstrate the converse:  A continuization of capital-dependent Parrondo games with a bias parameter yields the tilted flashing Brownian ratchet.

Let $0<\alpha<1$ and assume that $\alpha$ is rational, so that $\alpha=l/L$ for some relatively prime positive integers $1\le l<L$.  Consider a sequence of simple random walks on ${\bf Z}$ with periodic state-dependent transition probabilities defined as follows.  For each $n\ge1$, we let
\begin{equation}\label{transitions3}
P_n(j,j+1):=\begin{cases}p_0&\text{if mod$(j,nL)<nl$,}\\ p_1&\text{if mod$(j,nL)\ge nl$,}\end{cases}
\end{equation}
and $P_n(j,j-1)=1-P_n(j,j+1)$, where $p_0$ and $p_1$ are as in \eqref{p0,p1}.  
The special case of \eqref{transitions3} in which $n=1$ is precisely \eqref{transitions2}.

We will let $n\to\infty$ but first we require 
\begin{equation}\label{rho,eps}
\rho=1-\frac{\lambda}{n} \quad\text{and}\quad \eps=\frac{\kappa}{2n},
\end{equation}
where $\lambda>0$ and $\kappa$ is real, then we rescale time by allowing $n^2$ jumps per unit of time, and finally we rescale space by dividing by $n$.  The result in the limit as $n\to\infty$ is a tilted Brownian ratchet.  

We denote by $D_{\bf R}[0,\infty)$ the space of real-valued functions on $[0,\infty)$ that are right-continuous with left limits, and we give it the Skorokhod topology.

\begin{theorem}[{\bf Random walk approximation of tilted Brownian ratchet}\label{theorem1}] 
\textit{Let $\lambda>0$ and $\kappa$ be real.  For $n=1,2,\ldots$ $($and $n$ sufficiently large that $\rho,p_0,p_1\in(0,1))$, let $\{X_n(k),\,k=0,1,\ldots\}$ denote the random walk on ${\bf Z}$ defined by \eqref{p0,p1}--\eqref{rho,eps}, and let $\{X_t,\, t\ge0\}$ denote the tilted Brownian ratchet with parameters $\gamma:=\lambda(1-\alpha)/2$ and $\kappa$.  If $X_n(0)/n$ converges in distribution to $X_0$ as $n\to\infty$, then $\{X_n(\lfloor n^2 t\rfloor)/n,\,t\ge0\}$ converges in distribution in $D_{\bf R}[0,\infty)$ to $\{X_t,\, t\ge0\}$ as $n\to\infty$.}
\end{theorem}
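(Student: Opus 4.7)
The strategy is to show that the infinitesimal generators $A_n$ of the rescaled processes $\widetilde X_n(t):=X_n(\lfloor n^2 t\rfloor)/n$ converge to the generator $A$ of the tilted Brownian ratchet on a core of test functions, to establish tightness of $\{\widetilde X_n\}$ in $D_{\bf R}[0,\infty)$, and finally to invoke well-posedness of the limiting martingale problem to pin down the limit, following the standard martingale-problem approach to diffusion approximation.

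A single step of $\widetilde X_n$ (of duration $1/n^2$) carries $x=j/n$ to $x\pm 1/n$ with probability $p$ or $1-p$, where $p=p_0$ if $\mathrm{mod}(x,L)<\alpha L$ and $p=p_1$ otherwise, since $\alpha L=l$ forces $\mathrm{mod}(j,nL)=n\cdot\mathrm{mod}(x,L)$ for $x\in(1/n){\bf Z}$. Setting $\beta:=(1-\alpha)/\alpha$ and expanding $\rho^\beta=(1-\lambda/n)^\beta=1-\beta\lambda/n+O(n^{-2})$, elementary algebra yields
\[
n(2p_0-1)=-\frac{\beta\lambda}{2-\beta\lambda/n}-\kappa+O(n^{-1}),\qquad n(2p_1-1)=\frac{\lambda}{2-\lambda/n}-\kappa,
\]
whose limits $-\beta\lambda/2-\kappa=-\gamma/\alpha-\kappa$ and $\lambda/2-\kappa=\gamma/(1-\alpha)-\kappa$ (with $\gamma=\lambda(1-\alpha)/2$) are precisely the two piecewise values of $\mu(x)=-[\gamma V'(x)+\kappa]$. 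The normalized second moment of a step equals $1$ identically and higher moments are $O(n^{-1})$, so Taylor expansion gives, for every $f\in C_c^\infty({\bf R})$,
\[
A_n f(x):=n^2\bigl[p\,f(x+\tfrac1n)+(1-p)f(x-\tfrac1n)-f(x)\bigr]=\mu(x)f'(x)+\tfrac12 f''(x)+O(n^{-1}),
\]
with error uniform on compacts and drift convergence valid at every $x\notin L{\bf Z}\cup(\alpha L+L{\bf Z})$.

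Because $n(2p_i-1)$ is uniformly bounded and jumps have size $1/n\to 0$, the semimartingale decomposition $\widetilde X_n(t)=\widetilde X_n(0)+M_n(t)+\int_0^t b_n(\widetilde X_n(s-))\,ds$ has martingale part with $\langle M_n\rangle_t\le t$ and uniformly bounded drift $b_n$, so Aldous's criterion delivers tightness and $C$-tightness in $D_{\bf R}[0,\infty)$. Any subsequential limit $X$ is therefore continuous; using Skorokhod's representation together with the facts that $\mu$ is bounded and that its discontinuity set is Lebesgue-null, dominated convergence lets me pass to the limit in the discrete-time martingale relation and conclude that for every $f\in C_c^\infty({\bf R})$ the process
\[
f(X_t)-f(X_0)-\int_0^t\bigl[\mu(X_s)f'(X_s)+\tfrac12 f''(X_s)\bigr]ds
\]
is a martingale, i.e., $X$ solves the martingale problem for $A=\tfrac12\partial_x^2+\mu\,\partial_x$.

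Since $\mu$ is bounded measurable and the diffusion coefficient is the positive constant $1$, this martingale problem is well posed---weak existence is a one-line application of Girsanov to standard Brownian motion, and weak uniqueness follows from the standard theory of SDEs with bounded measurable drift and constant nondegenerate diffusion---so its unique solution with initial law the weak limit of $\mathcal L(\widetilde X_n(0))$ is by definition the tilted Brownian ratchet, the subsequential limit is uniquely identified, and the whole sequence converges. The main obstacle throughout is the discontinuity of $\mu$ on the countable set $L{\bf Z}\cup(\alpha L+L{\bf Z})$, which precludes a direct appeal to diffusion-approximation theorems requiring continuous coefficients; it is overcome by the observation that the limiting diffusion has a transition density at each positive time and hence spends Lebesgue-null time in that exceptional set almost surely, so the integrals in the martingale problem are unaffected by the points at which $A_n f$ may fail to converge to $Af$.
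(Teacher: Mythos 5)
Your proof is correct and, as far as one can tell, follows essentially the same route as the argument the paper relies on. The paper's own ``proof'' of Theorem~\ref{theorem1} consists only of the drift verification (that $n(2p_0-1)\to-\gamma/\alpha-\kappa$ and $n(2p_1-1)\to\gamma/(1-\alpha)-\kappa$ with $\gamma=\lambda(1-\alpha)/2$) plus a citation to Theorem~1 of \cite{EL18} for the remaining machinery; your drift expansions reproduce that computation exactly, and the tightness/martingale-problem/well-posedness scaffolding you supply is precisely the Ethier--Kurtz-style argument that \cite{EL18} invokes. The one place you should tighten the wording is the handling of the discontinuity set of $\mu$: rather than appealing to ``the limiting diffusion has a transition density'' (which risks sounding circular before the limit is identified), note directly that any subsequential limit is a continuous semimartingale with quadratic variation $\langle X\rangle_t=t$, so by the occupation-density formula it spends Lebesgue-null time in the countable set $L{\bf Z}\cup(\alpha L+L{\bf Z})$; that yields the needed dominated-convergence step without presupposing the conclusion.
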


\begin{proof}
The argument is essentially as in the proof of Theorem~1 of \cite{EL18}, except that, if $\mu_n\to\mu$ as $n\to\infty$, then
\begin{align*}
\frac12\bigg(1+\frac{\mu_n}{n}\bigg)=p_0&=\frac{(1-\lambda/n)^{(1-\alpha)/\alpha}}{1+(1-\lambda/n)^{(1-\alpha)/\alpha}}-\frac{\kappa}{2n}\\
&=\frac12\bigg(1-\frac{\lambda(1-\alpha)}{2n\alpha}-\frac{\kappa}{n}+o(n^{-1})\bigg)
\end{align*}
leads to $\mu=-\lambda(1-\alpha)/(2\alpha)-\kappa=-\gamma/\alpha-\kappa$, and
\begin{align*}
\frac12\bigg(1+\frac{\mu_n}{n}\bigg)=p_1&=\frac{1}{1+(1-\lambda/n)}-\frac{\kappa}{2n}\\
&=\frac12\bigg(1+\frac{\lambda}{2n}-\frac{\kappa}{n}+o(n^{-1})\bigg)
\end{align*}
leads to $\mu=\lambda/2-\kappa=\gamma/(1-\alpha)-\kappa$.
\end{proof}

Ordinarily, a Markov chain is given, and one attempts to approximate it by a diffusion process that is more amenable to analysis.  There are many examples in such areas as population genetics and queueing theory.  Here we are doing the opposite.  A diffusion process is given, and we want to approximate it by a random walk that is more amenable to computation.  The sequence of random walks in Theorem~\ref{theorem1} has a clear relationship to the capital-dependent Parrondo games of Section~\ref{Parrondo-games}, but it is not written in stone.  Perhaps it can be improved, in the sense of converging faster, thereby yielding a better approximation.  Towards this end we find a clue in the proof of Theorem~\ref{theorem1}.  We replace \eqref{p0,p1} and \eqref{rho,eps} by
\begin{equation}\label{p0,p1-improved}
p_0=\frac12\bigg(1-\frac{\lambda(1-\alpha)}{2n\alpha}-\frac{\kappa}{n}\bigg)\quad\text{and}\quad
p_1=\frac12\bigg(1+\frac{\lambda}{2n}-\frac{\kappa}{n}\bigg).
\end{equation}

\begin{theorem1'} [{\bf Improved random walk approximation of tilted Brownian ratchet}\label{theorem1-imprved}]
{\it
Theorem~\ref{theorem1} remains true with ``\eqref{p0,p1}--\eqref{rho,eps}'' replaced by ``\eqref{transitions3} and \eqref{p0,p1-improved}''.
}
\end{theorem1'}

For the next theorem, we assume that the time parameters $\tau_1,\tau_2>0$ of the tilted flashing Brownian ratchet are rational.  We let $m$ be the smallest positive integer such that $m^2\tau_1$ and $m^2\tau_2$ are integers.  Recall \eqref{p} and augment it with \eqref{rho,eps}.

\begin{theorem}[{\bf Random walk approximation of tilted flashing Brownian ratchet}\label{theorem2}]
\textit{Let $\lambda>0$, $\kappa$ be real, and $\tau_1$, $\tau_2$, and $m$ be as above.
For $n=m,2m,3m,\ldots$ $($and $n$ sufficiently large that $p,\rho,p_0,p_1\in(0,1))$, let $\{Y_n(k),\,k=0,1,\ldots\}$ denote the time-inhomogeneous random walk on ${\bf Z}$ that evolves as the simple random walk with $P_n^0(j,j+1)=p=1-P_n^0(j,j-1)$ for $n^2\tau_1$ steps, then as the random walk of Theorem~\ref{theorem1} for $n^2\tau_2$ steps, and so on, alternating in this way.  Let $\{Y_t,\, t\ge0\}$ denote the tilted flashing Brownian ratchet with parameters $\gamma:=\lambda(1-\alpha)/2$, $\kappa$, $\tau_1$, and $\tau_2$.  If $Y_n(0)/n$ converges in distribution to $Y_0$ as $n\to\infty$, then $\{Y_n(\lfloor n^2 t\rfloor)/n,\,t\ge0\}$ converges in distribution in $D_{\bf R}[0,\infty)$ to $\{Y_t,\, t\ge0\}$ as $n\to\infty$.  $($Here $n\to\infty$ through multiples of $m$.$)$}
\end{theorem}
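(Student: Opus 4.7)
The plan is to reduce Theorem~\ref{theorem2} to Theorem~\ref{theorem1} phase by phase, using the Markov property to concatenate the limiting diffusions. Since convergence in $D_{\bf R}[0,\infty)$ follows from convergence in $D_{\bf R}[0,T]$ for every finite $T$, I would fix $T$ and, without loss of generality, take $T=N(\tau_1+\tau_2)$ for some positive integer $N$. Because $n$ is constrained to multiples of $m$, the random-walk transition epochs $n^2\tau_1,n^2(\tau_1+\tau_2),\ldots$ are integers that align exactly with the switching times of $\zeta(t)$ under the space-time rescaling, so there is no mismatch artefact at the junctions.

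On the first ``off'' phase $[0,\tau_1]$, the increments of $Y_n$ are i.i.d.\ $\pm1$ variables with mean $2p-1=-\kappa/n$ and variance $1-(\kappa/n)^2\to 1$, so Donsker's invariance principle for triangular arrays with drift yields $\{(Y_n(\lfloor n^2 t\rfloor)-Y_n(0))/n\}_{0\le t\le\tau_1}\Rightarrow \{B_t-\kappa t\}_{0\le t\le\tau_1}$ in $D_{\bf R}[0,\tau_1]$; combined with the assumption $Y_n(0)/n\Rightarrow Y_0$, this gives convergence on $[0,\tau_1]$ to a Brownian motion with drift $-\kappa$ started at $Y_0$. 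On the first ``on'' phase $[\tau_1,\tau_1+\tau_2]$, conditional on the endpoint of the previous phase, the walk evolves for exactly $n^2\tau_2$ steps under the kernel \eqref{transitions3}, i.e.\ as the random walk of Theorem~\ref{theorem1}; Theorem~\ref{theorem1} then yields conditional convergence in $D_{\bf R}[\tau_1,\tau_1+\tau_2]$ to the tilted Brownian ratchet started from the corresponding limit endpoint. Alternating these two arguments over the remaining $2N-2$ phases produces phasewise weak convergence to the prescribed limit.

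To glue the phases I would invoke the standard Markov-chain gluing principle in the Skorokhod space: if a sequence $Z_n$ converges in $D_{\bf R}[0,s]$ to a Feller limit $Z$ with continuous paths, and if the conditional laws of the continuations given $Z_n(s)=y_n\to y$ converge to the Feller continuation started from $y$, then the concatenated processes converge in $D_{\bf R}[0,t]$. The hypotheses hold here because both limit regimes --- Brownian motion with drift and the tilted Brownian ratchet --- are Feller diffusions whose laws depend continuously on the initial condition, and because switching times are deterministic so no Skorokhod jitter appears at the junctions. The cleanest execution is via the Skorokhod representation theorem: pass to a common probability space on which the first-phase convergence is almost sure, deduce a.s.\ convergence of the phase endpoints, then apply the phase-wise convergence conditionally on these almost-surely convergent initial points, and iterate over the $2N$ phases.

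The main obstacle is the gluing step: beyond invoking Theorem~\ref{theorem1} and the functional CLT, one must verify the joint (endpoint, continuation) convergence needed to iterate. What makes this go through is the Feller property and continuous dependence on initial data of the two limiting diffusions, together with the deterministic, integer-aligned switching epochs, which sidesteps the technical subtleties that would arise with random or misaligned switching.
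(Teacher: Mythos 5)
The paper does not spell out this argument itself; its proof consists of a single sentence deferring to Theorem~2 of~\cite{EL18}. Your phase-by-phase decomposition is exactly the natural strategy for that earlier result as well, so you are likely reconstructing the same route: Donsker for the triangular-array simple random walk on the ``off'' phases, Theorem~\ref{theorem1} on the ``on'' phases, and the Markov/Feller gluing to pass from finitely many phase-wise limits to convergence in $D_{\bf R}[0,T]$. The key technical observations you make are the right ones: $n$ ranging over multiples of $m$ ensures $n^2\tau_1$ and $n^2\tau_2$ are integers, so switching epochs align with lattice steps and there is no boundary mismatch; on the off phase the increments have mean $2p-1=-\kappa/n$ and variance $1-(\kappa/n)^2\to1$, so the rescaled walk converges to $B_t-\kappa t$; and Theorem~\ref{theorem1}, applied with a deterministic initial state $\lfloor ny_n\rfloor$ with $y_n\to y$, supplies precisely the ``continuous in initial data'' form of conditional convergence that the gluing requires.

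One point worth sharpening if you write this out in full: the gluing principle you invoke needs the conditional kernels to converge \emph{whenever} $y_n\to y$, not merely for each fixed $y$, and the resulting path functional
$H_n(\omega):=\int G(\omega,\tilde\omega)\,Q_n(\omega(\tau_1),d\tilde\omega)$
must be shown to satisfy $H_n(Z_n)\to H(Z)$ under a.s.\ Skorokhod convergence of $Z_n$ to a continuous $Z$. Both follow from what you have --- Theorem~\ref{theorem1} with deterministic converging initial states gives the first, and path continuity of $Z$ promotes Skorokhod to locally uniform convergence for the second --- but in a fully detailed write-up one would spell out these two steps rather than appealing to a ``standard principle.'' With that caveat, the argument is sound and in the same spirit as the paper's (cited) proof.
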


\begin{proof}
The argument is essentially as in the proof of Theorem~2 of \cite{EL18}.
\end{proof}

\begin{theorem2'}[{\bf Improved random walk approximation of tilted flashing Brownian ratchet}\label{theorem2-improved}]
\textit{Theorem~\ref{theorem2} remains true with ``Theorem~\ref{theorem1}'' replaced by ``Theorem~1$'$''.}
\end{theorem2'}

It seems clear, and computations suggest, that the random walk approximation of Theorem~2$'$ is more accurate than that of Theorem~\ref{theorem2}, thereby justifying the use of the adjective ``improved''.

\section{Simulation of the solution of the SDE}\label{EM}

Here we approximate the solution of the SDE~\eqref{SDE1} by the solution of the stochastic difference equation (with time step $h$)
\begin{align*}
&X((k+1)h)-X(kh)\\
&\qquad{}=-[\gamma V'(X(kh))+\kappa]h+B((k+1)h)-B(kh),\quad k=0,1,\ldots,\nonumber
\end{align*}
often called the \textit{Euler--Maruyama method}.  Here the Brownian increments $B((k+1)h)-B(kh)$ are independent normal random variables with mean 0 and variance $h$.  To justify the method, we take $h=1/n^2$ and $X_n(k):=X(kh)$, obtaining the sequence of stochastic difference equations
\begin{equation}\label{SDE-discrete}
X_n(k+1)-X_n(k)=-\frac{\gamma V'(X_n(k))+\kappa}{n^2}+\frac{Z_{k+1}}{n},\quad k=0,1,\ldots,
\end{equation}
where $Z_1,Z_2,\ldots$ are independent standard normal random variables.

\begin{theorem}[{\bf Simulation of the tilted Brownian ratchet}]
\textit{Let $\gamma>0$ and $\kappa$ be real.  For $n=1,2,\ldots$, define the continuous-state Markov chain $\{X_n(k),\, k=0,1,\ldots\}$ by \eqref{SDE-discrete} $($given the initial state $X_n(0)$, independent of $Z_1,Z_2,\ldots)$, and let $\{X_t,\, t\ge0\}$ denote the tilted Brownian ratchet with parameters $\gamma$ and $\kappa$.  If $X_n(0)$ converges in distribution to $X_0$ as $n\to\infty$, then $\{X_n(\lfloor n^2 t\rfloor),\,t\ge0\}$ converges in distribution in $D_{\bf R}[0,\infty)$ to $\{X_t,\, t\ge0\}$ as $n\to\infty$.}
\end{theorem}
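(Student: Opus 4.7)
The plan is to follow the same pattern as the proof of Theorem~\ref{theorem1}, invoking a standard diffusion-approximation theorem (in the style of Chapter~7 of Ethier and Kurtz) that identifies the weak limit of a time-rescaled discrete-time Markov chain on $D_{\bf R}[0,\infty)$ from the limits of its one-step conditional moments, and then verifying those moment limits directly from \eqref{SDE-discrete}.

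Setting $\Delta_n(x):=-[\gamma V'(x)+\kappa]/n^2+Z/n$ with $Z\sim N(0,1)$, a short computation gives
\begin{align*}
n^2\E[\Delta_n(x)]&=-[\gamma V'(x)+\kappa],\\
n^2\E[\Delta_n(x)^2]&=1+\frac{[\gamma V'(x)+\kappa]^2}{n^2},\\
n^2\E[\Delta_n(x)^4]&=O(n^{-2}),
\end{align*}
all uniformly in $x$, since $V'$ is bounded. Hence the infinitesimal drift and infinitesimal variance of the scaled chain $\tilde X_n(t):=X_n(\lfloor n^2 t\rfloor)$ converge, at every continuity point of $V'$, to $-[\gamma V'(x)+\kappa]$ and $1$ respectively---matching the coefficients of \eqref{SDE1}---while the fourth-moment contribution is negligible on the $n^2$ time scale.

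Next I would establish tightness of $\{\tilde X_n\}$ in $D_{\bf R}[0,\infty)$ by decomposing it into a finite-variation drift piece, whose total variation on $[0,t]$ is uniformly bounded by $(\gamma\max|V'|+|\kappa|)\,t$, and a Gaussian random-walk martingale piece $\sum_{k=1}^{\lfloor n^2 t\rfloor}Z_k/n$, which converges to a standard Brownian motion by a trivial invariance principle (the partial sums are exactly Gaussian with variance $\lfloor n^2 t\rfloor/n^2\to t$). Both pieces are tight, and the compact containment condition follows from a uniform second-moment bound. Any weak subsequential limit then solves the martingale problem for $Af(x):=-[\gamma V'(x)+\kappa]f'(x)+\tfrac12f''(x)$ on $C_c^\infty({\bf R})$; well-posedness of this martingale problem with bounded measurable drift (obtainable via Girsanov from standard Brownian motion) identifies the limit as the tilted Brownian ratchet $\{X_t\}$.

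The main obstacle, precisely as in Theorem~\ref{theorem1}, is the discontinuity of $V'$ at the countable set $\{nL,\,nL+\alpha L:n\in{\bf Z}\}$, which prevents a direct appeal to those diffusion-approximation theorems that require continuous coefficients. I would handle it exactly as in the proof of Theorem~1 of \cite{EL18}: any solution of the limiting martingale problem has an absolutely continuous occupation measure (its one-dimensional marginals have densities by standard Fokker--Planck theory), so it a.s.\ spends zero Lebesgue time at the exceptional points, and a mollification of $V'$ followed by a density argument for test functions in $C_c^\infty({\bf R})$ completes the identification of any weak limit with $\{X_t\}$.
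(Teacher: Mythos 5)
Your argument is correct in its essentials, and the machinery you assemble --- one-step moment limits, tightness via a drift/martingale decomposition (the drift piece is C-tight because its increments are $O(n^{-2})$ with uniformly bounded total variation, and the martingale piece is a Gaussian random walk converging to Brownian motion), identification of any subsequential limit through the martingale problem, and well-posedness of that problem via Girsanov for bounded measurable drift --- is precisely the right toolkit. The paper's own proof, however, is a single sentence citing Corollary 4.8.17 of \cite{EK86}, a convergence theorem from Chapter~4, Section~8 of Ethier and Kurtz (cast directly in the martingale-problem framework) that converts the one-step moment asymptotics you compute, together with well-posedness of the limiting martingale problem, into weak convergence in $D_{\bf R}[0,\infty)$. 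So you are, in effect, reproving the cited corollary by hand for this special case rather than invoking it. That is legitimate and arguably more informative: in particular, your mollification/occupation-time step makes explicit something the citation quietly absorbs, namely that the discontinuity of $V'$ on the Lebesgue-null set $\{jL,\,jL+\alpha L : j\in{\bf Z}\}$ is harmless because the martingale-problem formulation requires only measurable drift and because Girsanov gives the limit absolutely continuous one-dimensional marginals. Same underlying approach; you rebuild the convergence theorem rather than quote it.
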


\begin{proof}
The proof is a straightforward application of Corollary 4.8.17 of \cite{EK86}.
\end{proof}

Again, we assume that the time parameters $\tau_1,\tau_2>0$ of the tilted flashing Brownian ratchet are rational.  We let $m$ be the smallest positive integer such that $m^2\tau_1$ and $m^2\tau_2$ are integers.  

We now modify \eqref{SDE-discrete} to
\begin{equation}\label{SDE2-discrete}
Y_n(k+1)-Y_n(k)=-\frac{\gamma\zeta(k/n^2) V'(Y_n(k))+\kappa}{n^2}+\frac{Z_{k+1}}{n},\quad k=0,1,\ldots,
\end{equation}
where $Z_1,Z_2,\ldots$ are independent standard normal random variables and $\zeta$ is the function defined in \eqref{zeta(t)}.

\begin{theorem}[{\bf Simulation of the tilted flashing Brownian ratchet}]
\textit{Let $\gamma>0$, $\kappa$ be real, and $\tau_1$, $\tau_2$, and $m$ be as above.  For $n=m,2m,3m,\ldots$, define the continuous-state Markov chain $\{Y_n(k),\, k=0,1,\ldots\}$ by \eqref{SDE2-discrete} $($given the initial state $Y_n(0)$, independent of $Z_1,Z_2,\ldots)$, and let $\{Y_t,\, t\ge0\}$ denote the tilted flashing Brownian ratchet with parameters $\gamma$, $\kappa$, $\tau_1$, and $\tau_2$.  If $Y_n(0)$ converges in distribution to $Y_0$ as $n\to\infty$, then $\{Y_n(\lfloor n^2 t\rfloor),\,t\ge0\}$ converges in distribution in $D_{\bf R}[0,\infty)$ to $\{Y_t,\, t\ge0\}$ as $n\to\infty$.  $($Here $n\to\infty$ through multiples of $m$.$)$}
\end{theorem}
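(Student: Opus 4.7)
The plan is to reduce this time-inhomogeneous convergence statement to a finite induction over regime switches, using the preceding (time-homogeneous) Simulation theorem on each regime. The hypothesis that $n$ is a multiple of $m$ is essential: it guarantees that $n^2\tau_1$ and $n^2\tau_2$ are integers, so the switch points of the chain $Y_n$ align exactly, under the rescaling $k\mapsto k/n^2$, with the discontinuities of $\zeta(t)$ defined in \eqref{zeta(t)}. In particular, on each subinterval on which $\zeta$ is constant, the recursion \eqref{SDE2-discrete} reduces to a time-homogeneous one.

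Fix $T>0$ and choose $N\in\mathbf{Z}_+$ with $N(\tau_1+\tau_2)\ge T$. On each ``off'' subinterval $[k(\tau_1+\tau_2),\,k(\tau_1+\tau_2)+\tau_1]$, the recursion \eqref{SDE2-discrete} specializes, since $\zeta\equiv0$ there, to
\[
Y_n(j+1)-Y_n(j) = -\kappa/n^2 + Z_{j+1}/n,
\]
and a direct application of Corollary 4.8.17 of \cite{EK86} yields convergence on that subinterval to a Brownian motion with drift $-\kappa$, started from the (random) endpoint of the preceding segment. On each ``on'' subinterval $[k(\tau_1+\tau_2)+\tau_1,\,(k+1)(\tau_1+\tau_2)]$, the recursion is exactly the one treated in the preceding Simulation theorem, yielding convergence to the tilted Brownian ratchet on that subinterval.

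I would then patch these segmentwise convergences inductively in $k$. By Skorokhod's representation, one may assume $Y_n(0)\to Y_0$ almost surely; the endpoint of the $k$th segment then plays the role of (random) initial condition for the $(k+1)$st. Continuity of each limiting diffusion in its initial condition (both Brownian motion with drift and the tilted Brownian ratchet have bounded, piecewise-constant drift and constant diffusion coefficient, so standard SDE estimates give the Feller property) propagates convergence through each switch. Concatenating the $2N$ segments yields convergence in $D_{\mathbf{R}}[0,T]$, and since $T$ is arbitrary, convergence in $D_{\mathbf{R}}[0,\infty)$. The limit solves \eqref{SDE2} and is therefore the tilted flashing Brownian ratchet with parameters $\gamma$, $\kappa$, $\tau_1$, $\tau_2$.

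The main obstacle is the bookkeeping at the regime switches, since Corollary 4.8.17 of \cite{EK86} is stated for time-homogeneous Markov chains. Two clean workarounds avoid any new analytic input. One option is to lift $Y_n$ to the time-augmented, time-homogeneous chain with state $(Y_n(j),\, (j/n^2)\bmod(\tau_1+\tau_2))$ on $\mathbf{R}\times[0,\tau_1+\tau_2)$, and apply Corollary 4.8.17 once to this enlarged chain. The second option, which is the route of the proof of Theorem~2 of \cite{EL18}, is the inductive patching described above. Either way, the argument is essentially a bookkeeping exercise built on top of the preceding theorem.
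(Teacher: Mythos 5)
The paper states this theorem without an explicit proof; by the pattern established in Sections 3 and 4, the intended argument clearly combines Corollary 4.8.17 of \cite{EK86} (as invoked for Theorem 3, the time-homogeneous case) with the segmentwise/inductive strategy borrowed from Theorem 2 of \cite{EL18} (as invoked for Theorem 2 here). Your proposal reconstructs exactly that plan, and correctly pinpoints the role of the hypothesis $n\in m\mathbf{Z}_+$: it makes $n^2\tau_1$ and $n^2\tau_2$ integers, so the regime-switch times land exactly on grid points and each block of \eqref{SDE2-discrete} is a genuine time-homogeneous chain.

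The one place your sketch is looser than it should be is the clause ``standard SDE estimates give the Feller property.'' The drift $\mu$ in \eqref{mu(x)} is bounded and piecewise constant but \emph{discontinuous}, so the classical Lipschitz/Picard theory and the usual $L^2$ Gr\"onwall estimates do not apply directly. The Feller property does hold here, but one should appeal either to uniqueness and continuous-dependence results for one-dimensional SDEs with bounded measurable drift and nondegenerate constant diffusion (Zvonkin/Veretennikov-type), or simply to the continuity in $x$ of the Gaussian-dominated transition density of each regime diffusion. Alternatively, the time-augmentation device you mention sidesteps the patching issue entirely by reducing the whole statement to a single application of Corollary 4.8.17 to the chain $(Y_n(j),\,(j/n^2)\bmod(\tau_1+\tau_2))$, at the mild cost of verifying the corollary's hypotheses for a two-component generator whose second coordinate is deterministic drift. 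Either route closes the argument; as written, the proposal is a correct reconstruction of the proof the authors omitted.
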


Suppose, for example, we want to simulate the distribution of $Y_{\tau_1+\tau_2}$.  We choose an $n$, say $n=100$, and simulate $Y_n(n^2(\tau_1+\tau_2))$ by computing \eqref{SDE2-discrete} for $k=0,1,\ldots,n^2(\tau_1+\tau_2)-1$.  Repeating this computation a large number of times (e.g., $10^6$) will yield a histogram that approximates the distribution of interest.

Simulations of the flashing Brownian ratchet (albeit modified slightly from the one defined here) can be found in several sources, including Kinderlehrer and Kowalczyk~\cite[Fig.~2]{KK02} and H\"anggi and Marchesoni~\cite[Fig.~4]{HM09}.

\section{Numerical solution of the FP equation}\label{FP}

The Fokker--Planck equation (or Kolmogorov forward equation) is the equation for the transition density $p(t,x,y)$ of the diffusion process with diffusion coefficient 1 and drift coefficient $\mu$, namely
$$
\frac{\partial}{\partial t}p(t,x,y)=\frac12\frac{\partial^2}{\partial y^2}p(t,x,y)-\frac{\partial}{\partial y}[\mu(y)p(t,x,y)].
$$
We can solve it numerically using the finite-difference method.  We take $t_k:=k/n^2$ and $y_j:=j/n$, hence $\Delta t:=t_{k+1}-t_k=1/n^2$ and $\Delta y:=y_{j+1}-y_j=1/n$.  We write $p(t,y)$ for $p(t,x,y)$ (the initial state $x$ is fixed) to get
\begin{align*}
\frac{p(t_{k+1},y_j)-p(t_k,y_j)}{\Delta t}&=\frac12\,\frac{p(t_k,y_{j+1})-2p(t_k,y_j)+p(t_k,y_{j-1})}{(\Delta y)^2}\\
&\qquad{}-\frac{\mu(y_{j+1})p(t_k,y_{j+1})-\mu(y_{j-1})p(t_k,y_{j-1})}{2\Delta y}.
\end{align*}
(The increment in the last term is taken over $[y_{j-1},y_{j+1}]$ instead of $[y_j,y_{j+1}]$ for the purpose of symmetry.) 
This reduces to
\begin{align}\label{FP-tBr}
p(t_{k+1},y_j)&=p(t_k,y_j)+\frac12[p(t_k,y_{j+1})-2p(t_k,y_j)+p(t_k,y_{j-1})]\nonumber\\
&\qquad\qquad\quad{}-\frac{1}{2n}[\mu(y_{j+1})p(t_k,y_{j+1})-\mu(y_{j-1})p(t_k,y_{j-1})]\\
&=\frac12\,p(t_k,y_{j+1})\bigg(1-\frac{1}{n}\mu(y_{j+1})\bigg)+\frac12\,p(t_k,y_{j-1})\bigg(1+\frac1n\mu(y_{j-1})\bigg),\nonumber
\end{align}
which we can solve recursively, once we specify the various $p(0,y_j)$.

Here we expect convergence as $n\to\infty$, but we do not have a theorem to this effect.  Perhaps the results for the heat equation in Mitchell and Griffiths~\cite[Section~2.6]{MG80} can be generalized to this setting.

For the corresponding density $q(t,y)$ of the tilted flashing Brownian ratchet, \eqref{FP-tBr} is replaced by
\begin{align}\label{FP-tfBr}
q(t_{k+1},y_j)&=\frac12\,q(t_k,y_{j+1})\bigg(1-\frac1n\zeta(t_k)\mu(y_{j+1})+\frac1n(1-\zeta(t_k))\kappa \bigg)\nonumber\\
&\qquad{}+\frac12\,q(t_k,y_{j-1})\bigg(1+\frac{1}{n}\zeta(t_k)\mu(y_{j-1})-\frac1n(1-\zeta(t_k))\kappa\bigg),
\end{align}
where $\zeta$ is the function defined in \eqref{zeta(t)}.

As we will see in the next section, there is a close relationship between \eqref{FP-tfBr} and Theorem~2$'$ (and between \eqref{FP-tBr} and Theorem~1$'$).

\section{Density at time $\tau_1+\tau_2$, starting at 0}\label{modeling1}

To model Figure~\ref{HAT00fig}, we make some measurements and find that, as explained in \cite{EL18}, $\alpha=1/4$ (hence $l=1$ and $L=4$) and $\tau_1=2.4$.  We cannot estimate $\tau_2$ from the figure, so for convenience, we take $\tau_2=\tau_1$.

We surmise that the tilted flashing Brownian ratchet described in Figure~\ref{HAT00fig} evolves as a Brownian motion with drift $-\kappa$, starting at 0, for time $\tau_1=2.4$. Then it evolves as the tilted Brownian ratchet with  $\alpha=1/4$, $L=4$, $\gamma$, and $\kappa$, starting from where the Brownian motion ended, for time $\tau_2=2.4$.  We seek the distribution of the tilted flashing Brownian ratchet at time $\tau_1+\tau_2=4.8$, which we can compare with panels ($c$) and ($f$) of Figure~\ref{HAT00fig}.

No analytical formula for the density of the tilted flashing Brownian ratchet at a fixed time is available.  Nevertheless, it can be approximated numerically, as Theorem~2$'$ suggests.  More precisely, we approximate $Y_{\tau_1+\tau_2}$ by $Y_n(n^2(\tau_1+\tau_2))/n$.  The number $m$ of that theorem is 5.  In each case we choose $n=100$, implying that at time $\tau_1+\tau_2$, the approximating random walk has made $n^2(\tau_1+\tau_2)=48{,}000$ steps.  Starting from $P_0(0)=1$, we compute its distribution $P_k(j):=\P(Y_n(k)=j)$ recursively after 1 step, 2 steps, \dots, $n^2(\tau_1+\tau_2)$ steps, using the simple random walk with \eqref{p} and \eqref{rho,eps} for the first $n^2 \tau_1$ steps, 
\begin{equation}\label{rw-recursion1}
P_{k+1}(j)=P_k(j+1)(1-p)+P_k(j-1)p,
\end{equation}
then the random walk of Theorem~1$'$ for the next $n^2\tau_2$ steps,
\begin{equation}\label{rw-recursion2}
P_{k+1}(j)=P_k(j+1)(1-p_{I(j+1)})+P_k(j-1)p_{I(j-1)},
\end{equation}
where 
$$
I(j):=\begin{cases}0&\text{if $\text{mod}(j,nL)<nl$,}\\1&\text{if $\text{mod}(j,nL)\ge nl$.}\end{cases}
$$

This allows us to plot the histogram of $Y_n(n^2(\tau_1+\tau_2))/n$, interpolating linearly to approximate the density of $Y_{\tau_1+\tau_2}$.  In Figure~\ref{tfBr0-1-5}, we display the results for $\gamma=\lambda(1-\alpha)/2$ with $\lambda=1,2,3,4,5$ and $\kappa=\theta\kappa_0/2$ with $\theta=-1, 0,1,2,3,4$, where $\kappa_0=0.2748$ is the value of $\kappa$, accurate to four significant digits, for which mean displacement is 0 when $\lambda=5$ and $n=100$.

\begin{figure}[ht]
\centering
\includegraphics[width=1.5in]{theta=-1}
\includegraphics[width=1.5in]{theta=0}
\includegraphics[width=1.5in]{theta=1}\\
{\small ($a$) $\kappa=-\kappa_0/2$  \hspace{1.8cm} ($b$) $\kappa=0$ \hspace{2.2cm}   ($c$) $\kappa=\kappa_0/2$ }
\caption{\label{tfBr0-1-5}A Brownian motion with drift $-\kappa$, starting at 0, runs for time $\tau_1=2.4$.  Then, starting from where the Brownian motion ended, a tilted Brownian ratchet with $\alpha=1/4$, $L=4$, $\gamma=\lambda(1-\alpha)/2$, $\lambda=1,2,3,4,5$ (from top to bottom), and $\kappa=\theta\kappa_0/2$, $\kappa_0=0.2748$, $\theta=-1,0,1,2,3,4$ (from left to right), runs for time $\tau_2=2.4$.  The black curve is an approximation to the density of the tilted flashing Brownian ratchet at time $\tau_1+\tau_2$, via the improved random walk approximation with $n=100$. The blue curve is the sawtooth potential $\gamma V$ (not just $V$ as in \cite{EL18}), tilted with slope $\kappa$, then shrunk vertically by a factor of $L$ for clarity.  (See the paragraph below \eqref{rw-recursion2} for the meaning of $\kappa_0$.)}
\end{figure}

\setcounter{figure}{1}

\begin{figure}[th]
\centering
\includegraphics[width=1.5in]{theta=2}
\includegraphics[width=1.5in]{theta=3}
\includegraphics[width=1.5in]{theta=4}\\
{\small ($d$) $\kappa=\kappa_0$ \hspace{2cm} ($e$) $\kappa=3\kappa_0/2$ \hspace{2cm} ($f$) $\kappa=2\kappa_0$}
\caption{\label{tfBr0-1-5a}(Continued) A Brownian motion with drift $-\kappa$, starting at 0, runs for time $\tau_1=2.4$.  Then, starting from where the Brownian motion ended, a tilted Brownian ratchet with $\alpha=1/4$, $L=4$, $\gamma=\lambda(1-\alpha)/2$, $\lambda=1,2,3,4,5$ (from top to bottom), and $\kappa=\theta\kappa_0/2$, $\kappa_0=0.2748$, $\theta=-1,0,1,2,3,4$ (from left to right), runs for time $\tau_2=2.4$.  The black curve is an approximation to the density of the tilted flashing Brownian ratchet at time $\tau_1+\tau_2$, via the improved random walk approximation with $n=100$. The blue curve is the sawtooth potential $\gamma V$ (not just $V$ as in \cite{EL18}), tilted with slope $\kappa$, then shrunk vertically by a factor of $L$ for clarity.}
\end{figure}

Before commenting on these figures, we observe that they could also have been derived using the methods of Sections \ref{approx} (Theorem~\ref{theorem2}), \ref{EM} (Theorem~4), or \ref{FP}. However, stochastic simulation (Section~\ref{EM}) provides slightly less smooth figures in a comparable amount of time.  For numerical solution of the Fokker--Planck equation (Section~\ref{FP}), recursion \eqref{rw-recursion1} and \eqref{rw-recursion2} with $p_0$ and $p_1$ as in \eqref{p0,p1-improved} is identical with recursion \eqref{FP-tfBr} when $\gamma = \lambda (1-\alpha)/2$.  Thus, the method of Section~\ref{FP} gives results identical to the improved random walk approximation of Section~\ref{approx} (Theorem~2$'$).  

This observation is not entirely new.  The main conclusion of Allison and Abbott~\cite{AA02} as well as Toral et al.~\cite{TAM03a,TAM03b} is that, if the Fokker--Planck equation for the Brownian ratchet is suitably discretized, one gets an equation that is identical to the master equation for Parrondo's capital-dependent game $B$, at least for a suitable potential.  A similar result was found by Heath et al.~\cite{HKK02}.  What \textit{is} new in our approach is that we recover the specific potential in the definition of the Brownian ratchet.  This requires not just letting $\Delta t$ and $\Delta y$ in the finite-difference method go to zero with $(\Delta y)^2/\Delta t$ constant, but also requires letting the probabilities in the Parrondo games go to $1/2$ in a very specific way, unlike in the earlier works where these probabilities were fixed.  

We computed statistics for the $n$th random walk approximation (improved and unimproved) of the tilted flashing Brownian ratchet with $\alpha=1/4$, $L=4$, $\gamma=\lambda(1-\alpha)/2$, $\lambda=5$, $\kappa=\kappa_0=0.2748$, $\tau_1=\tau_2=2.4$, and initial state 0, at time $\tau_1+\tau_2$, for $n=10,20,30,\ldots,200$.  The rate of convergence using the improved approximation appeared notably faster.  We omit these tables to save space.

Although the conceptual figures in panels ($c$) and ($f$) of Figure~\ref{HAT00fig} are largely accurate, the figures of Figure~\ref{tfBr0-1-5} offer several key improvements, just as we found in the untilted case.  First, the three peaks of the density are pointed, unlike a normal density.  Second, they are asymmetric, again unlike a normal density, with more mass to the left than to the right of $-4$, 0, and 4.  This is largely due to the fact that, for example, the drift to the left on $(0, 1)$ is stronger than the drift to the right on $(-3, 0)$, but there also seems to be some dependence on the slope of the tilt.  Third, the relative heights of the peaks vary more than in the conceptual figure.  The simulations cited previously had implicitly noted the pointed \cite{KK02} and asymmetric \cite{KK02,HM09} peaks.

The peaks in Figure~\ref{HAT00fig} appear to be normal, while the peaks in Astumian and H\"anggi~ \cite[Fig.~2]{AH02} are explicitly called ``gaussians."  This thinking led Astumian~\cite{A97} to conclude that mean displacement at time $\tau_1+\tau_2$ has the form
$$
\sum_{j=-\infty}^\infty jL\,P_{jL},\quad\text{where } P_{jL}:=\P(B_{\tau_1}-\kappa\tau_1\in((j-1+\alpha)L,(j+\alpha)L]).
$$
This is inaccurate for two reasons.  Most importantly, the mean of the peak at $jL$ is not $jL$ because of the skewed nature of that peak.  Also, the area of the peak at $jL$ is not the normal probability $P_{jL}$; this is only an approximation that becomes exact in the limit as $\gamma\to\infty$.   

We consider a statistic that measures the skewness of the peaks of the distribution.  The skewness can be defined to be the mass of the distribution in $\bigcup_{n=-\infty}^\infty (nL,nL+\alpha L)$ minus the mass of the distribution in $\bigcup_{n=-\infty}^\infty (nL-(1-\alpha)L,nL)$.  A negative skewness means that the peaks are skewed to the left.

Table~\ref{computations-theta} shows the effect of varying $\kappa$, when $\lambda=5$, on several statistics of interest, namely areas and heights of the three peaks, mean displacement, and skewness.  Figure~\ref{mean-contour} shows that the mean displacement for the $n$th improved random walk ($n=100$) approximating the tilted flashing Brownian ratchet, starting at 0, is increasing in $\gamma$ and decreasing in $\kappa$. In fact, it decreases nearly linearly in $\kappa$.

\begin{table}[thb]
\caption{\label{computations-theta} Computations for the $n$th improved random walk ($n=100$) approximating the tilted flashing Brownian ratchet with $\alpha=1/4$, $L=4$, $\gamma=\lambda(1-\alpha)/2$, $\lambda=5$, $\kappa=\theta\kappa_0/2$ for $\kappa_0=0.2748$ and various $\theta$, $\tau_1=\tau_2=2.4$, and initial state 0, at time $\tau_1+\tau_2$, illustrating the effect of varying the slope of the tilt.  (See the paragraph below \eqref{rw-recursion2} for the meaning of $\kappa_0$.) }
\tabcolsep=.32cm
\vspace{-0.5cm}
\begin{center}
\begin{footnotesize}
{\begin{tabular}{@{}ccccc@{}}
\noalign{\smallskip}
\hline
\noalign{\smallskip}
$\theta$   &  areas of the three peaks   &heights of the three peaks & mean & skewness \\
& & [at $-4,0,4$]& displace. & \\
\noalign{\smallskip}
\hline
\noalign{\smallskip}
            $-1.5$ &  $(0.01521,0.6381,0.3466)$  &  $(0.05597,2.348,1.269)\hphantom{0}$ &    \hphantom{$-$}1.210\hphantom{00}  & $-0.4930$\\
            $-1.0$ &  $(0.01995,0.6727,0.3074)$  &  $(0.07215,2.433,1.107)\hphantom{0}$ &    \hphantom{$-$}1.027\hphantom{00}  & $-0.5062$\\
            $-0.5$ &  $(0.02588,0.7040,0.2701)$  &  $(0.09197,2.501,0.9563)$            &    \hphantom{$-$}0.8474\hphantom{0}  & $-0.5194$\\
\hphantom{$-$}0.0  &  $(0.03324,0.7314,0.2353)$  &  $(0.1159,2.551,0.8178)\hphantom{0}$ &    \hphantom{$-$}0.6716\hphantom{0}  & $-0.5325$\\
\hphantom{$-$}0.5  &  $(0.04225,0.7547,0.2031)$  &  $(0.1445,2.581,0.6921)\hphantom{0}$ &    \hphantom{$-$}0.4995\hphantom{0}  & $-0.5457$\\
\hphantom{$-$}1.0  &  $(0.05317,0.7732,0.1736)$  &  $(0.1782,2.591,0.5796)\hphantom{0}$ &    \hphantom{$-$}0.3308\hphantom{0}  & $-0.5589$\\
\hphantom{$-$}1.5  &  $(0.06622,0.7868,0.1470)$  &  $(0.2173,2.580,0.4801)\hphantom{0}$ &    \hphantom{$-$}0.1646\hphantom{0}  & $-0.5721$\\
\hphantom{$-$}2.0  &  $(0.08166,0.7951,0.1233)$  &  $(0.2620,2.549,0.3933)\hphantom{0}$ &    \hphantom{$-$}0.0000$^{\rm 2}$           & $-0.5853$\\
\hphantom{$-$}2.5  &  $(0.09970,0.7980,0.1023)$  &  $(0.3125,2.499,0.3185)\hphantom{0}$ &                $-0.1643$\hphantom{0} & $-0.5985$\\
\hphantom{$-$}3.0  &  $(0.1205,0.7954,0.08405)$  &  $(0.3687,2.429,0.2549)\hphantom{0}$ &                $-0.3296$\hphantom{0} & $-0.6117$\\
\hphantom{$-$}3.5  &  $(0.1443,0.7873,0.06834)$  &  $(0.4302,2.342,0.2016)\hphantom{0}$ &                $-0.4971$\hphantom{0} & $-0.6250$\\
\hphantom{$-$}4.0  &  $(0.1711,0.7739,0.05498)$  &  $(0.4965,2.238,0.1574)\hphantom{0}$ &                $-0.6682$\hphantom{0} & $-0.6383$\\
\hphantom{$-$}4.5  &  $(0.2010,0.7553,0.04376)$  &  $(0.5668,2.121,0.1214)\hphantom{0}$ &                $-0.8439$\hphantom{0} & $-0.6516$\\
\noalign{\smallskip}
\hline
\noalign{\smallskip}
&&& \multicolumn{2}{r}{$^{\rm 2}$\,$-0.000001280$}\\
\end{tabular}}    
\end{footnotesize} 
\end{center}        
\end{table}

\begin{figure}[!h]
\centering
\includegraphics[width=3.5in]{mean_contour}
\caption{\label{mean-contour}Mean displacement for the $n$th improved random walk ($n=100$) approximating the tilted flashing Brownian ratchet with $\alpha=1/4$, $L=4$, $\gamma=\lambda(1-\alpha)/2$, $\kappa=\theta\kappa_0/2$, $\kappa_0=0.2748$, $\tau_1=\tau_2=2.4$, and initial state 0, at time $\tau_1+\tau_2$, as a function of $\lambda$ and $\theta$.}
\end{figure}

In particular, let us define $\kappa_0(\lambda)$ to be such that the contour with mean displacement 0 passes through $(\lambda,\kappa_0(\lambda))$.  For example, $\kappa_0(5)=0.2748$ to four significant digits.  Figure~\ref{mean-contour} shows that $\kappa_0(\lambda)$ increases in $\lambda$.  It appears that $\kappa_0(\lambda)\to5/12$ as $\lambda\to\infty$.  More generally, 
$$
\kappa_0(\lambda)\to\frac{\big(\frac12-\alpha\big)L}{\tau_1}\quad\text{as}\quad \lambda\to\infty.
$$
Indeed, we expect that $\lim_{\lambda\to\infty}\kappa_0(\lambda)$ is the value of $\kappa$ that makes $\P(Y_{\tau_1}\ge\alpha L)=\P(Y_{\tau_1}<-(1-\alpha)L)$.  But $Y_{\tau_1}=B_{\tau_1}-\kappa\tau_1$, so $\kappa$ must satisfy $\kappa\tau_1+\alpha L=-[\kappa\tau_1-(1-\alpha)L]$, and the conclusion follows.  This result is due to Astumian~\cite{A97}.

Figure~\ref{skewness} (also Table~\ref{computations-theta}) shows that the absolute value of the skewness for the $n$th improved random walk ($n=100$) approximating the tilted flashing Brownian ratchet, starting at 0, is increasing in $\kappa$.

\begin{figure}[!h]
\centering
\includegraphics[width=3.2in]{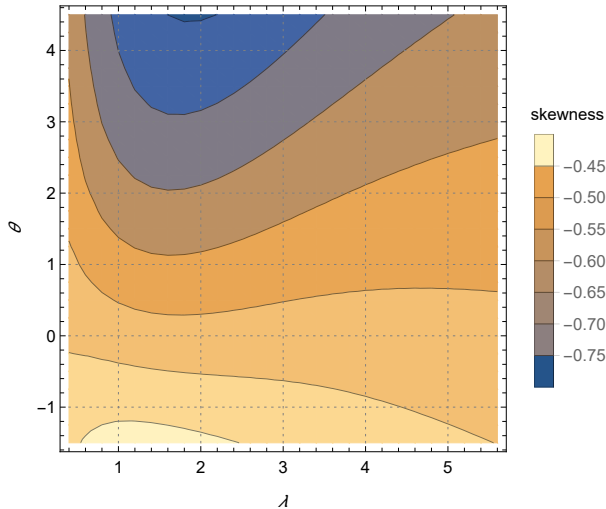}
\caption{\label{skewness}Skewness for the $n$th improved random walk ($n=100$) approximating the tilted flashing Brownian ratchet with $\alpha=1/4$, $L=4$, $\gamma=\lambda(1-\alpha)/2$, $\kappa=\theta\kappa_0/2$, $\kappa_0=0.2748$, $\tau_1=\tau_2=2.4$, and initial state 0, at time $\tau_1+\tau_2$, as a function of $\lambda$ and $\theta$.}
\end{figure}

\section{Density at time $\tau_1+\tau_2$, starting at stationarity}\label{modeling2}

We may want to wrap the tilted Brownian ratchet and the tilted flashing Brownian ratchet around the circle of circumference $L$.  By virtue of the fact that their behavior is spatially periodic with period $L$, the wrapped processes remain Markovian.  We define the \textit{wrapped tilted Brownian ratchet} as the $[0,L)$-valued process
$$
\bar X_t:=\text{mod}(X_t,L),
$$
where $X_t$ denotes the tilted Brownian ratchet in \eqref{SDE1}, with the understanding that the endpoints of the interval $[0,L)$ are identified, making it a circle of circumference $L$.  The same procedure yields the \textit{wrapped tilted flashing Brownian ratchet}, 
$$
\bar Y_t:=\text{mod}(Y_t,L),
$$
where $Y_t$ denotes the tilted flashing Brownian ratchet in \eqref{SDE2}.  Note that the skewness, as it was defined in the preceding section, can be determined for the original processes from the wrapped processes.  The same is not true for the mean displacement.

We begin by finding the stationary distribution of the wrapped tilted Brownian ratchet.  This process is a diffusion process in $[0,L]$ (endpoints are identified) with generator
$$
(\mathscr{L}f)(x):=\frac12 f''(x)+\mu(x)f'(x).
$$
To begin, let us assume that $\mu$ is a smooth function with $\mu(0)=\mu(L)$.  We hope that, once we have the desired formula, we can substitute the actual $\mu$, namely \eqref{mu(x)}, which has discontinuities.

We want to find the function $\phi$ satisfying
$$
\int_0^L (\mathscr{L}f)(x)\phi(x)\,dx=0,
$$
$\phi(0)=\phi(L)$, and $\phi'(0)=\phi'(L)$ for all smooth $f$ with $f(0)=f(L)$ and $f'(0)=f'(L)$.  We expect that
\begin{equation}\label{adj}
\int_0^L (\mathscr{L}f)(x)\phi(x)\,dx=\int_0^L f(x)(\mathscr{L}^*\phi)(x)\,dx,
\end{equation}
where
$$
(\mathscr{L}^*\phi)(x):=\frac12 \phi''(x)-(\mu \phi)'(x)
$$
is the formal adjoint of $\mathscr{L}$, showing that it suffices to solve the differential equation
\begin{equation}\label{ode}
(\mathscr{L}^*\phi)(x)=0.
\end{equation}
The justification of \eqref{adj} requires three integrations by parts and $f(0)=f(L)$, $f'(0)=f'(L)$, $\phi(0)=\phi(L)$, $\phi'(0)=\phi'(L)$, and $\mu(0)=\mu(L)$. 

It remains to solve the differential equation \eqref{ode}, which is equivalent to
$$
\frac12 \phi'(x)-\mu(x)\phi(x)=C, \qquad \phi(0)=\phi(L),\quad \phi'(0)=\phi'(L),
$$
for some constant $C$.  Letting
$$
M(x):=\int_0^x \mu(y)\,dy,
$$
we find using elementary methods that
\begin{equation}\label{stationary}
\phi(x)=\phi(0)e^{2M(x)} \bigg(1-\big[1-e^{-2M(L)}\big]\frac{\int_0^x e^{-2M(y)}\,dy}{\int_0^L e^{-2M(y)}\,dy}\bigg).
\end{equation}
The solution is unique, once we require that
$$
\int_0^L \phi(x)\,dx=1.
$$

Now we can use this result with the desired $\mu$, even though $\mu$ has discontinuities.  Specifically, with
\begin{equation}\label{M(x)}
M(x)=-[\gamma V(x)+\kappa x],
\end{equation}
we have an analytical formula for $\phi$, which can be shown to be equivalent to a formula of Reimann~\cite[Eq.~(2.36)]{R02}.

\begin{theorem}[{\bf Stationary distribution of the wrapped tilted Brownian ratchet}]
\textit{The wrapped tilted Brownian ratchet with parameters $\alpha$, $L$, $\gamma$, and $\kappa$ has a stationary distribution $\pi$ of the form \eqref{stationary} $($using \eqref{M(x)}$)$, restricted to $[0,L]$.}
\end{theorem}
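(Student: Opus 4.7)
The plan is to establish that formula \eqref{stationary} with $M$ given by \eqref{M(x)} is a stationary density for the wrapped tilted Brownian ratchet, even though the derivation in the body of the paper assumed $\mu$ smooth with $\mu(0)=\mu(L)$, whereas the actual drift \eqref{mu(x)} is piecewise constant with jumps at $x=\alpha L$ and at the identified endpoints $0\sim L$.

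The approach is direct piecewise verification on the circle. On each of the intervals $(0,\alpha L)$ and $(\alpha L,L)$, where $\mu$ is constant, the first-order ODE $\tfrac12\phi'-\mu\phi=C$ has an elementary exponential solution; one glues the two pieces by (i) continuity of $\phi$ at $\alpha L$ and (ii) the periodic matching $\phi(0)=\phi(L)$, and the normalization $\int_0^L\phi\,dx=1$ fixes the overall scale. Because $V$ is continuous, $M(x)=-[\gamma V(x)+\kappa x]$ is continuous on $[0,L]$, and a direct computation shows that \eqref{stationary} satisfies (i), (ii), and the ODE with a single constant $C$ common to both pieces.

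It remains to verify the weak stationarity identity $\int_0^L(\mathscr{L}f)(x)\phi(x)\,dx=0$ for every smooth periodic test function $f$. Splitting the integral at $\alpha L$ and integrating by parts on each piece yields boundary contributions of the form $[\tfrac12 f'\phi]_{\alpha L^-}^{\alpha L^+}$ and $[f(\mu\phi-\tfrac12\phi')]_{\alpha L^-}^{\alpha L^+}$, together with analogous terms at $0$ and $L$. The first vanishes by continuity of $\phi$; the second vanishes by continuity of the probability current $J:=\mu\phi-\tfrac12\phi'$, which is automatic once the ODE holds with the same $C$ on both pieces. Periodicity of $f$, $\phi$, and $J$ kills the terms at $0\sim L$, and the remaining integrals vanish because $\mathscr{L}^*\phi=0$ holds piecewise.

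The main obstacle is precisely the jump of $\mu$ at $\alpha L$, which obstructs the integration-by-parts step behind \eqref{adj} in the smooth case. The resolution, as above, is that although both $\mu\phi$ and $\tfrac12\phi'$ jump at $\alpha L$, their difference $J$ does not; equivalently, $\mathscr{L}^*\phi=0$ reduces to the statement that the probability current $J$ is constant on the circle, a condition entirely consistent with piecewise-constant $\mu$ and continuous $\phi$. An alternative, equivalent route is to mollify $V$ to smooth periodic $V_n\to V$ uniformly, apply the smooth case to obtain $\phi_n$, and pass to the uniform limit $\phi_n\to\phi$, using standard diffusion-convergence arguments to transfer stationarity.
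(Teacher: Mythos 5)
Your proposal is correct and, in fact, more complete than what the paper offers: the paper derives \eqref{stationary} only under the smoothness hypothesis $\mu(0)=\mu(L)$ and then simply asserts ``now we can use this result with the desired $\mu$, even though $\mu$ has discontinuities,'' without justifying that the integration-by-parts argument behind \eqref{adj} survives the jump of $\mu$ at $\alpha L$. You fill exactly this gap. The key observation --- that the piecewise ODE $\tfrac12\phi'-\mu\phi=C$ forces the probability current $J=\mu\phi-\tfrac12\phi'$ to be the same constant $-C$ on both pieces, so that $J$ is continuous across $\alpha L$ even though $\mu$ and $\phi'$ individually jump there --- is precisely what makes the boundary terms cancel when you split the weak-stationarity integral at $\alpha L$, and it is the content the paper glosses over. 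Your mollification alternative is also a legitimate route (and closer in spirit to ``use the smooth result and pass to the limit''), though the direct piecewise verification is cleaner since it avoids having to transfer stationarity through a diffusion limit. Both your argument and the paper's rely on the same underlying structure (solve $\mathscr{L}^*\phi=0$ on the circle with periodic boundary conditions and normalize); the difference is that you actually prove the discontinuous case rather than asserting it.
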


The skewness of this stationary distribution can be evaluated.  In the special case $\kappa=0$, the computation is especially easy and we find that the skewness is $2\alpha-1$; surprisingly, it does not depend on $\gamma$.  

We denote the tilted flashing Brownian ratchet at time $t$, starting from $x\in{\bf R}$ at time 0, by $Y_t^x$, and the wrapped tilted flashing Brownian ratchet at time $t$, starting from $x\in[0,L)$ at time 0, by $\bar Y_t^x$.  Then the one-step transition function
\begin{equation}\label{transition-function}
\bar P(x,\cdot):=\P(\bar Y_{\tau_1+\tau_2}^x\in\cdot)
\end{equation}
for a continuous-state Markov chain has a stationary distribution $\bar\pi$. The mean displacement $\bar\mu$ of the tilted flashing Brownian ratchet over the time interval $[0,\tau_1+\tau_2]$, starting from the stationary distribution $\bar\pi$, namely
\begin{align*}
\bar\mu&:=\int_0^L\E[Y_{\tau_1+\tau_2}^x-Y_0^x]\,\bar\pi(dx)\\
&\phantom{:}=\int_{-(1-\alpha)L}^{\alpha L}\E[Y_{\tau_1+\tau_2}^x-Y_0^x]\,\bar\pi(dx)
\end{align*}
is of interest. The second equality is based on the periodicity of the integrand (with period $L$) and the convention that we do not distinguish notationally between $\bar\pi$ and its image under the mapping
$$
x\mapsto\begin{cases}x&\text{if $0\le x<\alpha L$,}\\ x-L&\text{if $\alpha L\le x< L$.}\end{cases}
$$

We approximate $\bar\mu$ as follows.  The integrand can be estimated as in Section~\ref{modeling1}, the only distinction being that the starting point of the tilted flashing Brownian ratchet is $x$, not 0.  The stationary distribution $\bar\pi$ of the one-step transition function~\eqref{transition-function} can be approximated by the stationary distribution of the finite Markov chain whose one-step transition matrix has the form 
$$
P(i,j):=\P(\bar Y_n(n^2(\tau_1+\tau_2))=j\mid \bar Y_n(0)=i),\quad i,j=0,1,\ldots,nL-1,
$$
where $\{\bar Y_n(k),\, k=0,1,\ldots\}$ denotes the wrapped (period-$nL$) random walk used to approximate the wrapped tilted flashing Brownian ratchet.  A minor technical issue, if $nL$ is even,  is that this Markov chain fails to be irreducible if $n^2(\tau_1+\tau_2)$ is even, in which case we replace $n^2(\tau_1+\tau_2)$ by $n^2(\tau_1+\tau_2)+1$.  Then the chain is irreducible and there is a unique stationary distribution.  The black curve in the first row of the panels in Figure~\ref{tfBrstat-fig} approximates the density of $\bar\pi$ with support $[-3,1)$.  Starting from the approximate $\bar\pi$ at time 0, the black curves in the second and third rows of each panel are the approximations to the density of the tilted flashing Brownian ratchet at times $\tau_1$ and $\tau_1+\tau_2$, respectively. When $\lambda=5$ and $\kappa=\kappa_0=0.2748$, computations show that $\bar\mu=0.01519$, which is slightly larger than the corresponding number in Table~\ref{computations-theta}, namely 0.0000.  A more complete set of such computations is given as Table~\ref{computations-theta-stationarity}.  It is interesting to compare this table (starting at stationarity) with Table~\ref{computations-theta} (starting at 0).

\begin{figure}[p]
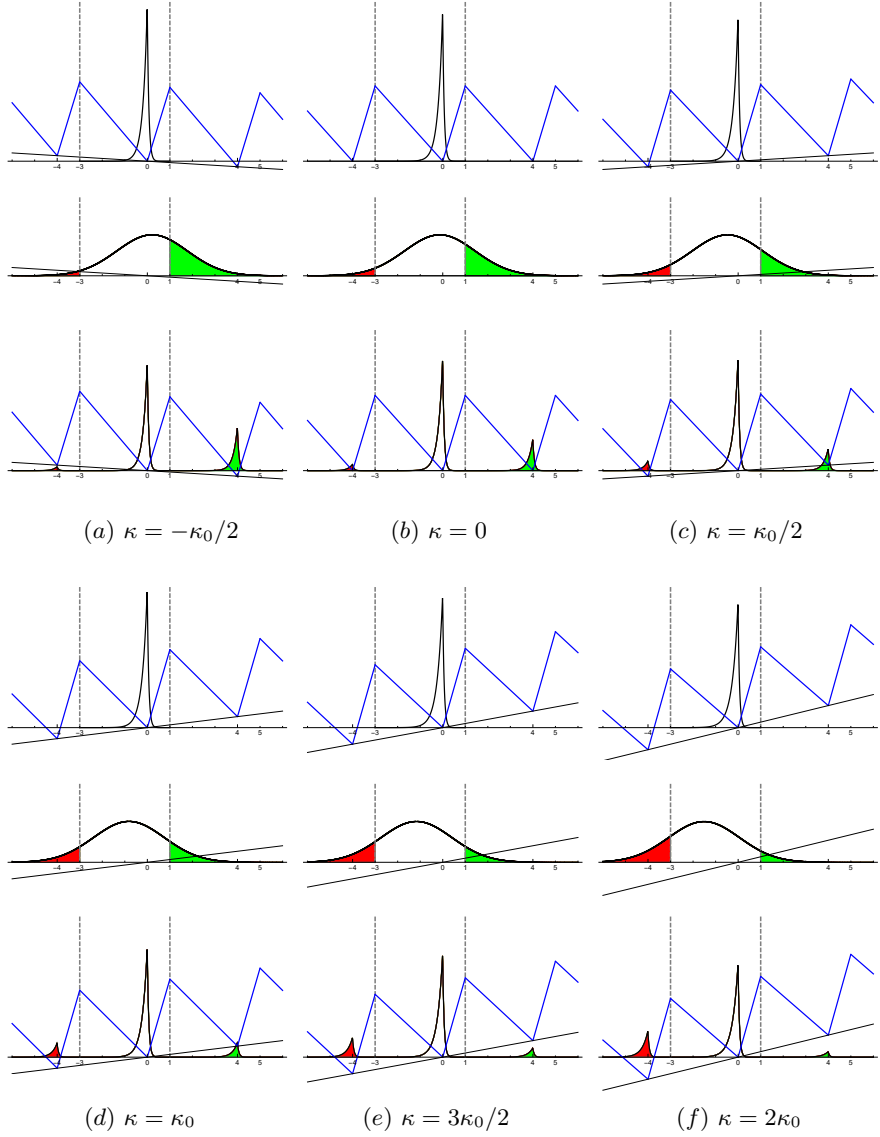

\centering
\includegraphics[width=1.5in]{stheta=-1}
\includegraphics[width=1.5in]{stheta=0}
\includegraphics[width=1.5in]{stheta=1}\\
{\small ($a$) $\kappa=-\kappa_0/2$ \hspace{1.8cm} ($b$) $\kappa=0$ \hspace{2.2cm} ($c$) $\kappa=\kappa_0/2$}
\\
\vspace{0.5cm}
\includegraphics[width=1.5in]{stheta=2}
\includegraphics[width=1.5in]{stheta=3}
\includegraphics[width=1.5in]{stheta=4}\\
{\small ($d$) $\kappa=\kappa_0$ \hspace{2cm} ($e$) $\kappa=3\kappa_0/2$ \hspace{2cm} ($f$) $\kappa=2\kappa_0$}
\caption{\label{tfBrstat-fig}Starting from the stationary distribution $\bar\pi$ with support $[-3,1)$, a Brownian motion with drift $-\kappa$ runs for time $\tau_1=2.4$.  Then, starting from where the Brownian motion ended, a tilted Brownian ratchet with $\alpha=1/4$, $L=4$, $\gamma=\lambda(1-\alpha)/2$, $\lambda=5$, and $\kappa=\theta\kappa_0/2$, $\kappa_0=0.2748$ (with $\theta=-1,0,1,2,3,4$), runs for time $\tau_2=2.4$.  The black curves approximate the density of the tilted flashing Brownian ratchet at times 0, $\tau_1$, and $\tau_2+\tau_2$, via the improved random walk approximation with $n=100$.  The blue curves are as in Figure~\ref{tfBr0-1-5}.  The vertical axes in the panels of the first and third rows are comparable, whereas the vertical axis in the panels of the second row (for the normal-like curve but not for the tilted axis) has been stretched by a factor of $L$ for clarity.}
\end{figure}

\begin{table}[!htb]
\caption{\label{computations-theta-stationarity} Computations for the $n$th improved random walk ($n=100$) approximating the tilted flashing Brownian ratchet with $\alpha=1/4$, $L=4$, $\gamma=\lambda(1-\alpha)/2$, $\lambda=5$, $\kappa=\theta\kappa_0/2$ for $\kappa_0=0.2748$ and various $\theta$, and $\tau_1=\tau_2=2.4$, starting at stationarity, at time $\tau_1+\tau_2$, illustrating the effect of varying the slope of the tilt.  (See the paragraph below \eqref{rw-recursion2} for the meaning of $\kappa_0$.) }
\tabcolsep=.32cm
\vspace{-0.5cm}
\begin{center}
\begin{footnotesize}
{\begin{tabular}{@{}ccccc@{}}
\noalign{\smallskip}
\hline
\noalign{\smallskip}
$\theta$   &  areas of the three peaks   &heights of the three peaks & mean & skewness \\
&& [at $-4,0,4$]& displace. & \\
\noalign{\smallskip}
\hline
\noalign{\smallskip}
            $-1.5$ &  $(0.01929,0.6609,0.3199)$             &  $(0.07095,2.431,1.171)\hphantom{0}$ &   \hphantom{$-$}1.207\hphantom{00}  & $-0.4957$\\
            $-1.0$ &  $(0.02523,0.6936,0.2811)$             &  $(0.09127,2.509,1.013)\hphantom{0}$ &   \hphantom{$-$}1.027\hphantom{00}  & $-0.5089$\\
            $-0.5$ &  $(0.03267,0.7226,0.2448)$             &  $(0.1161,2.567,0.8666)\hphantom{0}$ &   \hphantom{$-$}0.8507\hphantom{0}  & $-0.5220$\\
\hphantom{$-$}0.0  &  $(0.04185,0.7471,0.2110)$             &  $(0.1459,2.605,0.7336)\hphantom{0}$ &   \hphantom{$-$}0.6783\hphantom{0}  & $-0.5352$\\
\hphantom{$-$}0.5  &  $(0.05304,0.7668,0.1802)$             &  $(0.1814,2.622,0.6141)\hphantom{0}$ &   \hphantom{$-$}0.5094\hphantom{0}  & $-0.5483$\\
\hphantom{$-$}1.0  &  $(0.06654,0.7812,0.1522)$             &  $(0.2230,2.618,0.5082)\hphantom{0}$ &   \hphantom{$-$}0.3433\hphantom{0}  & $-0.5615$\\
\hphantom{$-$}1.5  &  $(0.08260,0.7901,0.1273)$             &  $(0.2709,2.591,0.4157)\hphantom{0}$ &   \hphantom{$-$}0.1790\hphantom{0}  & $-0.5746$\\
\hphantom{$-$}2.0  &  $(0.1015,0.7932,0.1053)\hphantom{0}$  &  $(0.3255,2.543,0.3360)\hphantom{0}$ &   \hphantom{$-$}0.01519             & $-0.5878$\\
\hphantom{$-$}2.5  &  $(0.1234,0.7904,0.08614)$             &  $(0.3867,2.475,0.2682)\hphantom{0}$ &               $-0.1495$\hphantom{0} & $-0.6010$\\
\hphantom{$-$}3.0  &  $(0.1486,0.7817,0.06969)$             &  $(0.4542,2.387,0.2113)\hphantom{0}$ &               $-0.3163$\hphantom{0} & $-0.6142$\\
\hphantom{$-$}3.5  &  $(0.1771,0.7672,0.05573)$             &  $(0.5275,2.281,0.1644)\hphantom{0}$ &               $-0.4867$\hphantom{0} & $-0.6274$\\
\hphantom{$-$}4.0  &  $(0.2090,0.7470,0.04405)$             &  $(0.6057,2.159,0.1261)\hphantom{0}$ &               $-0.6618$\hphantom{0} & $-0.6407$\\
\hphantom{$-$}4.5  &  $(0.2442,0.7214,0.03440)$             &  $(0.6876,2.024,0.09540)$            &               $-0.8425$\hphantom{0} & $-0.6539$\\
\noalign{\smallskip}
\hline
\noalign{\smallskip}
\end{tabular}}
\end{footnotesize}
\end{center}
\end{table}

\section{Conclusions}

A tilted flashing Brownian ratchet is a flashing Brownian ratchet in the presence of a static homogeneous force acting in the direction opposite that of the directed motion, thereby reducing (or reversing) the directed motion effect.  We began by deriving a general class of capital-dependent Parrondo games with a bias parameter, motivated by the tilted Brownian ratchet.  These Parrondo games, in turn, motivated our random walk approximation of the tilted flashing Brownian ratchet, which we subsequently improved.  This is an efficient method of numerically approximating the distribution of a continuous process at a fixed time, more accurate than stochastic simulation (Section~\ref{EM}) but equivalent to numerical PDE analysis (Section~\ref{FP}).  We found the approximate density of the tilted flashing Brownian ratchet after one time period, starting at 0. We also found the approximate density of the tilted flashing Brownian ratchet after the same time period, but now starting from a stationary distribution associated with the wrapped tilted flashing Brownian ratchet.  In both cases the distributions are mixtures of asymmetric unimodal distributions, contrary to what was suggested by conceptual figures such as Figure~\ref{HAT00fig}.  We approximated the mean displacement of the tilted flashing Brownian ratchet over that time period and observed that the mean displacement is increasing in $\gamma$, a measure of the amplitude of the potential, and decreasing in $\kappa$, the slope of tilt.

\section*{Acknowledgments}

The work of S. N. Ethier was partially supported by a grant from the Simons Foundation (429675). 
The work of J. Lee was supported by the Basic Science Research Program through the National Research Foundation of Korea (NRF) funded by the Ministry of Education (NRF-2018R1D1A1B07042307).

\end{document}